\newtheorem{Th}{Theorem}
\newtheorem{Def}{Definition}
\newtheorem{Prop}{Proposition}
\newtheorem{Cor}{Corollary}
\title{A coupling construction for spin systems with infinite range interactions}
\author{F. Ezanno\footnote{\noindent CMI, 
Universit\'e de Provence, 39, rue F. Joliot Curie, 13453 Marseille Cedex 13, FRANCE, \textit{fezanno@cmi.univ-mrs.fr} \newline \textit{AMS} 2010 \textit{subject classification}. Primary, 60K35. \newline \textit{Keywords and phrases.} Interacting particle systems, graphical construction, coupling. }}
\date{}
\newcommand{\RR}{\mathbb{R}}
\newcommand{\ZZ}{\mathbb{Z}}
\newcommand{\EE}{\mathbb{E}}
\newcommand{\PP}{\mathbb{P}}
\newcommand{\Pg}{\mathbf{P}}
\newcommand{\Eg}{\mathbf{E}}
\newcommand{\F}{\mathcal{F}}
\newcommand{\ind}{\mathds{1}}
\newcommand{\DX}{\mathcal{D}(X)}
\newcommand{\CX}{\mathcal{C}(X)}
\newcommand{\ud}{\mathrm{d}}
\newcommand{\gui}[1]{``#1''}
\def\restriction#1#2{\mathchoice
{\setbox1\hbox{${\displaystyle #1}_{\scriptstyle #2}$}
\restrictionaux{#1}{#2}}
{\setbox1\hbox{${\textstyle #1}_{\scriptstyle #2}$}
\restrictionaux{#1}{#2}}
{\setbox1\hbox{${\scriptstyle #1}_{\scriptscriptstyle #2}$}
\restrictionaux{#1}{#2}}
{\setbox1\hbox{${\scriptscriptstyle #1}_{\scriptscriptstyle #2}$}
\restrictionaux{#1}{#2}}}
\def\restrictionaux#1#2{{#1\,\smash{\vrule height .8\ht1 depth .85\dp1}}_{\,#2}} 
\def\tn{|\!|\!|}
\begin{document}
\maketitle
\begin{abstract}
Given a countable set of sites and a collection of flip rates at each site, we give a sufficient condition on the long-range dependancies of the flip rates ensuring the well-definedness of the corresponding spin system. This hypothesis has already been widely used (\cite{Gray}, \cite{GrayGriffeath}, \cite{HolleyStroock}, \cite{Liggettarticle}) but our construction brings a new insight to understand why it is natural.
\\The process is first constructed as a limit of finite spin systems. Then we identify its generator and give a simple criterion for a measure to be invariant with respect to it.
\end{abstract}

In this paper we propose a new construction of a class of Markov processes, called spin systems, taking values in a configuration space $X=\{0,1\}^V$ where $V$ is a countable set whose elements are called \textit{sites}. For these processes the dynamics are such that when a transition occurs only one coordinate is allowed to change, and for an initial configuration $\eta$ the coordinate $\eta(v)$ flips at some rate denoted by $c_v(\eta)$. By this we mean that when $t\to 0$, we require that $$\PP_\eta\big(\text{state at site }v\text{ at time }t\text{ is } 1-\eta(v)\big)=c_v(\eta)t+o(t).$$ Thus the dynamics are governed by a family $$ c =(c_v(\eta),~ v\in V,~ \eta\in X)$$ of nonnegative real numbers called \emph{flip rates}. The content of this paper could be generalized in two possible directions: we could allow each coordinate to take values in some countable set, taking for instance $X=\ZZ^V$, and we could also consider transitions that may affect the states of a finite (but bounded) number of sites instead of affecting just one site. However these two extensions could be treated with a technique similar to the one presented here, and this would complicate the notations without making the problem more interesting.
\\The problem of defining in a satisfactory way this class of processes is not recent. Liggett, in \cite{Liggettarticle} and later in \cite{Liggett} (Chapter 1), gave a quite general solution using an approach based on the Hille-Yosida theorem. In \cite{Gray}  and \cite{HolleyStroock} the problem was considered in terms of the well-posedness of the corresponding martingale problem. For each of these approaches it turns out to be necessary to require good conditions for the family of flip rates $ c$, that loosely speaking must ensure that $c_v(\eta)$ depends on the coordinates $\eta(w)$ in a \gui{sufficiently decreasing} way as $w$ goes to infinity. This corresponds to our condition (\ref{cond2}), which already appeared in various papers, including \cite{Gray} (condition 4.10), \cite{GrayGriffeath} (condition 14), \cite{HolleyStroock} (condition 4.6) and \cite{Liggettarticle} (condition 1.6). 
\\The main purpose of this paper is to give an intuitive explanation of (\ref{cond2})  and simultaneously provide a quite general construction based on elementary probabilistic tools. In this regard, our construction is in the same spirit as Harris' in \cite{Harris} but we do not require a finite range interaction. We show that when it is satisfied a certain countable state Markov process is non-explosive. This implies that there is no influence coming from infinity at a given site in finite time. Heuristically, the main idea here is that in order to know the state $\xi^\eta_t(v)$  of our spin system at site $v$ and time $t$ starting from some configuration $\eta$ at time $0$, one has to know $\eta(w)$ for a certain (random) set $E$ of sites $w$. The interpretation of (\ref{cond2}) given in this paper is that it yields the appropriate control over the long range dependancies of $c_v(\eta)$ for $E$ to remain almost surely finite, so actually the fact of \gui{freezing} all the sites outside a finite box has no influence on the value of $\xi^\eta_t(v)$ provided that this box is big enough. This is the program of Section 1 that leads to the limit process of finite spin systems. Section 2 is dedicated to make the link between this limit process and the Markov pregenerator associated to $c$. It is of practical importance, given a construction of some process, to have a characterisation of its invariant measures. This is also done in Section 2.

\section{Coupling and existence of the limit process}  

The space $X=\{0,1\}^V$ being endowed with its product topology, we recall that $X$ is a metrizable compact space. Any element $\eta$ of $X$ is canonically identified with a function $\eta : V\to\{0,1\}$, and with the subset $\{v\in V:\eta(v)=1\}$ of $V$. The sites are denoted by Latin letters such as $v,w$ and the configurations by Greek letters such as $\xi,~\zeta,~\eta$ or $\chi$. Let  $X'=\{\eta\in X : \mathrm{Card}(\eta)<+\infty\}$, where $\mathrm{Card}(\eta)$ denotes the cardinality of $\eta$, and for any $\eta\in X $ and $v\in V$ let $\eta^v$ be the configuration that coincides with $\eta$ everywhere off $v$ but not at $v$:
$$\eta^v(w)=\begin{cases} \eta(w),&\text{ if  $w\neq v$,}\\
1-\eta(v),&\text{ if $w=v$.}\end{cases}
$$
Let us write $1_W$ for the configuration such that $1_W(v)=1$ if $v\in W$ and $0$ otherwise. From now on we fix a collection of flip rates $ c =(c_v(\eta),~ v\in V,~ \eta\in X)$, and we assume that each $c_v$ is a continuous function on $X$. For any pair $(w,v)\in V^2$ of sites, with $w\neq v$, let $$a(w,v)=\sup_{\eta\in X}|c_v(\eta)-c_v(\eta^w)|,$$ and let $a(w,w)=0$. One may think of $a(w,v)$ as a measure of the maximal influence of the state at site $w$ on the flip rate at site $v$. By the continuity of $c_v$ it is clear that whenever two configurations $\eta_1$ and $\eta_2$ coincide on some subset $W$ of $V$ we have 
\begin{equation}|c_v(\eta_1)-c_v(\eta_2)|\leq\sum_{w\in W^c}a(w,v).\label{coincide}\end{equation}
We introduce the notation $\overline a(w,v)=a(v,w)$. Throughout this paper we will assume that the flip rates are bounded:

\begin{equation}\tag{H1}
 C=\sup_{v\in V,\eta\in X}c_v(\eta)<+\infty, 
 \label{cond1}
\end{equation}
and that there exists a family $(\lambda_v,v\in V)$ bounded away from $0$: $$\lambda=\inf_{v\in V}\lambda_v>0,$$ such that
\begin{equation}\tag{H2}
A=\sup_{v\in V}\sum_{w\neq v}\frac{\lambda_w}{\lambda_v}a(w,v)<+\infty.
\label{cond2}
\end{equation} 
As already mentioned, various papers already used the last assumption but we point out that a slightly stronger version of (\ref{cond2}) is stressed in \cite{Liggettarticle} and \cite{HolleyStroock} where only the special case $\lambda_v=1$ is considered. As we said the interest of our construction is to enlighten the natural aspect of (\ref{cond2}), using essentially coupling methods and a duality relation between two processes. To sum up our strategy, we first define a process by letting evolve only the coordinates lying in a finite box $V_n\subset V$, then we let this box grow to $V$, and finally we prove that the obtained sequence of processes converges. Our Theorem \ref{couplage} makes all of this possible with a graphical construction. 
\\Before explaining how to obtain a spin system, let us begin by defining finite spin systems. These are simple Markov jump processes on a finite set of configurations.  
\begin{Def}
Let $\eta\in X$ and $W$ be a finite subset of $V$. We say that a process $\xi=(\xi_t,t\geq 0)$ is a $(\eta,W, c )$-\emph{finite spin system} if it is a Markov process on $X^W_\eta=\{\theta\in X : \restriction{\theta}{W^c}=\restriction{\eta}{W^c} \}$ such that 
\begin{itemize}
\item $\xi_0=\eta$;
\item for any $\theta,\theta'\in X^W_\eta$, its jump rate from $\theta$ to $\theta'$ is $c_v(\theta)$ if $\theta'=\theta^v$ for some $v\in W$, and $0$ otherwise.
\end{itemize}
We denote by $\PP_{\eta,W}$ the law of such a process and $\EE_{\eta,W}$ the expectation under that law. When we will use these notations we will continue to denote by $\xi_t$ the value of the process at time $t$.
\end{Def}
Let us choose an increasing sequence $(V_n,~n\geq 1)$ of finite boxes such that $\cup_{n\geq 1}V_n=V$. We want to describe what happens to a $(\eta,V_n, c )$-\emph{finite spin system} when $n$ goes to $\infty$.
\\We now turn to the definition of another class of Markov processes, called \textit{invasion processes}, that we will use as a tool to control the effect of changing one of the parameters $\eta$ or $W$ on the evolution of our finite spin system.
\\Let $W\subset V$ and $\alpha=(\alpha(w,v),~w\neq v)$ be a family of nonnegative numbers. Later, the role of $\alpha$ will be played by $a$ or $\overline a$. The principle is that for each pair $(x,y)$ of sites, we place independently arrows from $x$ to $y$ at the jump times of a Poisson process, and we decide that the 1's spread using these arrows. More precisely we consider a family of mutually independant Poisson processes $(P_{x,y},~x\neq y)$, $P_{x,y}$ having the intensity $\alpha(x,y)$, and we define a (random) oriented graph $G$ on the set $V\times\RR_+$ deciding that $\big((x,s),(y,t)\big)$ is an edge if either $x=y$ and $s\leq t$, or $s=t$ and $s$ is a jump time of $P_{x,y}$. Let $\{(w,s)\to_G(v,t)\}$ be the event that there is a path from $(w,s)$ to $(v,t)$ in $G$ (\textit{i.e.} a finite sequence of arrows at increasing times starting from $w$ and leading to $v$).
\begin{Def}
Let $\Pg_{W,\alpha}$ be the law of the process $(\zeta_t,~t\geq 0)$ on $X$ defined by $$\zeta_t(v)=1\Leftrightarrow\exists w\in W,~(w,0)\to_G(v,t),$$  and $\Eg_{W,\alpha}$ the expectation under that law. When we will use these notations we will continue to denote by $\zeta_t$ the value of the process at time $t$. If the law of some process is $\Pg_{W,\alpha}$ then we call it a $(W,\alpha)$-\emph{invasion process}. If $W=\{w\}$ with $v\in V$ we simply write $\Pg_{w,\alpha}$.
\end{Def}

It is clear from its definition that the invasion process possesses a monotonicity property w.r.t.\@ the parameter $\alpha$, namely if $\alpha$ and $\tilde{\alpha}$ are such that for $w\neq v$, $\alpha(w,v)\leq \tilde{\alpha}(w,v)$, then a simple coupling argument shows that for any measurable, positive function $f$ on $X$ which is increasing w.r.t.\@ the canonical partial order on $X$, we have 
\begin{equation}\Eg_{W,\alpha}[f(\zeta_t)]\leq \Eg_{W,\tilde\alpha}[f(\zeta_t)].\label{monotonie}\end{equation}
Let us define $$\gamma_\alpha(v,\chi)= (1-\chi(v))\sum_{w\neq v}\alpha(w,v)\chi(w),$$  $$g_\alpha(\chi)= \sum_{v\in V}\lambda_v\gamma_\alpha(v,\chi).$$ Note that for $\alpha=a$ we have $\gamma_a(v,\chi)\leq\lambda^{-1}\lambda_v\sum_{w\neq v}\lambda_v^{-1}\lambda_w a(w,v)$, so 
\begin{equation}\gamma_a(v,\chi)\leq \lambda^{-1}\lambda_v A.\label{gamma}\end{equation} We also define the function $$q(\chi)=\sum_{v\in V} \lambda_v \chi(v),~\chi\in X.$$ 
Note that if we take the weights $\lambda_v=1$ then $q(\chi)$ is the cardinality of $\chi$. Now we give a simple description of the invasion process in two special cases: first, when we start with a finite number of $0$'s, and then when we start with a $1$ at some site and $0$'s at all the other sites. In this paper we will only use these two initial conditions.
\begin{Prop}
\begin{itemize}
\item[\emph{(i)}] Suppose $W^c$ is finite. Under $\Pg_{W,a}$, $(\zeta_t,t\geq 0)$ is a Markov process on the finite set $X_W=\{\chi\in X : \restriction{\chi}{W}=1 \}$, starting from $\zeta_0=1_W$. Moreover, while in the configuration $\chi$, the site $v$ flips at rate $\gamma_a(v,\chi)$.
\item[\emph{(ii)}]Suppose now  that $W=\{w\}$ and (\ref{cond2}) is fulfilled. Then
\begin{equation}\Eg_{w,\overline a}[q(\zeta_t)]\leq\lambda_w e^{A t},\label{esperance}\end{equation}
$A$ being the constant which appears in (\ref{cond2}). In particular under $\Pg_{w,\overline a}$, $(\zeta_t, t\geq 0)$ is non-explosive in the sense that $\Pg_{w,\overline a}(\mathrm{Card}(\zeta_t)<+\infty)=1$, and $(\zeta_t,t\geq 0)$ is a Markov process taking its values in the countable set $X'$, starting from $\zeta_0=1_w$. Moreover, while in the configuration $\chi$, the site $v$ flips at a rate $\gamma_{\overline a}(v,\chi)$. 
\end{itemize}
\label{invasion}
\end{Prop}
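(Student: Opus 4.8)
The plan is to read the dynamics of $(\zeta_t)$ straight off the graphical construction, and then, for (ii), to control its growth by a Gronwall argument applied to $q$. The structural fact underlying both parts is that $G$ contains every edge $\big((x,s),(x,t)\big)$ with $s\le t$, so reachability from $W\times\{0\}$ is nondecreasing in $t$: the invasion process only turns $0$'s into $1$'s, never the reverse. Consequently a site $v$ with $\zeta_t(v)=0$ switches to $1$ exactly when an arrow $P_{u,v}$ fires from a site $u$ that is already $1$, which in configuration $\chi$ occurs at total rate $\sum_{u\neq v}\alpha(u,v)\chi(u)$, and is impossible when $\chi(v)=1$; this is precisely $\gamma_\alpha(v,\chi)$. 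The Markov property then follows from the independence of the Poisson processes on disjoint time intervals: conditionally on $\zeta_s=\chi$, any path reaching $(v,s+u)$ must sit at a site of $\chi$ at time $s$, so the evolution after $s$ has the law of an invasion process started from $\chi$ and depends only on the arrows after time $s$.

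For (i), since $\zeta_0=1_W$ and $1$'s never vanish we have $\restriction{\zeta_t}{W}\equiv 1$, hence $\zeta_t\in X_W$; as $W^c$ is finite this is a finite set. Each flip rate is $\gamma_a(v,\chi)\le\lambda^{-1}\lambda_vA<+\infty$ by (\ref{gamma}), so the total jump rate out of every configuration is finite and $(\zeta_t)$ is a genuine finite Markov jump process on $X_W$ with the stated rates.

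For (ii) the difficulty is that infinitely many sites may now switch on, and one must rule out explosion. The estimate (\ref{esperance}) does exactly this, but it cannot be obtained by naively differentiating $\Eg_{w,\overline a}[q(\zeta_t)]$, since that already presupposes the non-explosion we are trying to establish. To break the circularity I would approximate: let $\zeta^n$ be the invasion process built from the arrows $P_{x,y}$ with $x,y\in V_n$ only. By the reasoning of part (i) each $\zeta^n$ is a finite Markov chain on $\{0,1\}^{V_n}$ with some generator $L_n$, and since every finite path of $G$ is captured once $V_n$ is large, $\zeta^n_t(v)\uparrow\zeta_t(v)$ for all $v$ (this is the monotonicity of (\ref{monotonie})). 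On the finite chain Dynkin's formula is legitimate and gives $\Eg[q(\zeta^n_t)]=\lambda_w+\int_0^t\Eg[(L_nq)(\zeta^n_s)]\,\ud s$.

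The heart of the matter is the bound $(L_nq)(\chi)\le A\,q(\chi)$. Writing $L_nq=\sum_v\lambda_v\gamma_{\overline a}(v,\chi)$, using $\overline a(u,v)=a(v,u)$, dropping the factor $1-\chi(v)$ and exchanging the order of summation gives
$$(L_nq)(\chi)\le\sum_v\lambda_v\sum_{u\neq v}a(v,u)\chi(u)=\sum_u\chi(u)\sum_{v\neq u}\lambda_v a(v,u)\le\sum_u\chi(u)\,A\lambda_u=A\,q(\chi),$$
the last inequality being exactly (\ref{cond2}) applied at the site $u$. This is the step that explains why $\overline a$, rather than $a$, is the correct parameter and why the weights $\lambda_v$ must appear: it is the transpose that turns (\ref{cond2}) into a contraction estimate after the sum has been reversed. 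I expect this summation-exchange to be the crux; the remaining steps are routine. Feeding the bound into Dynkin and invoking Gronwall's lemma yields $\Eg[q(\zeta^n_t)]\le\lambda_we^{At}$ uniformly in $n$, and monotone convergence gives (\ref{esperance}). Finally, since $q(\chi)\ge\lambda\,\mathrm{Card}(\chi)$, the finiteness of $\Eg_{w,\overline a}[q(\zeta_t)]$ forces $\mathrm{Card}(\zeta_t)<+\infty$ almost surely, so $(\zeta_t)$ is non-explosive and lives in $X'$; the flip-rate computation of the first paragraph, now valid on $X'$ with finite total rate $\le\lambda^{-1}A\,q(\chi)$ out of each finite $\chi$, identifies it as the claimed Markov process with rates $\gamma_{\overline a}(v,\chi)$.
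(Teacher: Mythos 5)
Your proposal is correct and takes essentially the same route as the paper's proof: truncate the rates to the finite box $V_n$, bound the growth of $q$ on the resulting finite-state chain via the key inequality $\sum_v\lambda_v\gamma_{\overline{a}_n}(v,\chi)\leq A\,q(\chi)$ (whose summation-exchange derivation, which the paper dismisses as ``a simple calculation using (H2)'', you rightly identify as the crux) combined with Gr\"onwall's lemma, and then pass to the limit using the pathwise coupling $\zeta^n_t(v)\uparrow\zeta_t(v)$ and monotone convergence, deducing non-explosion and the flip rates exactly as the paper does. The only cosmetic differences are that you use the integrated Dynkin formula where the paper differentiates $u_n(t)$ directly, and you make the circularity issue (why one cannot differentiate $\Eg_{w,\overline{a}}[q(\zeta_t)]$ outright) explicit.
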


\begin{proof}
In both cases the Markovian character of $(\zeta_t,t\geq 0)$ follows from the fact that Poisson processes have stationary independent increments.
\\To calculate the flip rates in the first case, just note that for an initial configuration $\chi$ with $\chi(v)=0$, the coordinate $\zeta_t(v)$ flips if one of the $P_{w,v}$ (with $\chi(w)=1$) jumps. The result follows from the fact that $\sum_{w:\chi(w)=1}P_{w,v}$ is a Poisson process with intensity $\gamma_\alpha(v,\chi)$.
\\For the second claim, first consider 
\begin{equation} a_n(x,y)=\begin{cases} a(x,y), & \text{ if }x,y\in V_n,   \\   0, & \text{ otherwise; }\end{cases}\label{notation}\end{equation}
and $\overline{a}_n(x,y)=a_n(y,x)$. Let $u_n(t)=\Eg_{w,\overline{a}_n}[q(\zeta_t)]$. On one hand, a simple calculation using (\ref{cond2}) shows that $g_{\overline{a}_n}(\chi)\leq A q(\chi)$. On the other hand under $\Pg_{w,\overline{a}_n}$, $(\zeta_t,~t\geq 0)$ takes its values in a finite state space so the following formula is easy to obtain: 
$$\Eg_{w,\overline{a}_n}[q(\zeta_{t+h})|\mathcal \zeta_t]=q(\zeta_t)+g_{\overline{a}_n}(\zeta_t)h+o(h).$$
Taking the expectation in this formula provides $$\lim_{h\to 0}\frac{u_n(t+h)-u_n(t)}{h}=\Eg_{w,\overline{a}_n}[g_{\overline{a}_n}(\zeta_t)]\leq A u_n(t).$$ Thus Gr\"onwall's lemma together with $u_n(0)=\lambda_w$ give the inequality $u_n(t)\leq\lambda_w e^{A t}$.
\\Now in order to complete the proof we describe a coupling of some processes $(\zeta^n,~n\geq 1)$ and $\zeta$ where $\zeta^n$ is a $(w,\overline{a}_n)$-invasion process, $\zeta$ is a $(w,\overline{a})$-invasion process, and $\zeta^n_t(v)$ is an increasing sequence that converges to $\zeta_t(v)$. To do this, we consider the above Poissonian construction and define $\zeta^n$ just like $\zeta$ except that $\zeta^n$ only uses $P_{x,y}$ with $x,y\in V_n$. More precisely we decide that $((x,s),(y,t))$ is an edge of the graph $G_n$ if either $x=y$ and $s\leq t$, or $s=t$, $x,y\in V_n$ and $s$ is a jump time of $P_{x,y}$. Then we define $\zeta^n_t$ by $$\zeta^n_t(v)=1\Leftrightarrow (w,0)\to_{G_n}(v,t).$$ Any given path of $G$ is also a path of $G_n$ as soon as $V_n$ is big enough to contain all the sites of the path. Consequently $\zeta_t(v)$ is the increasing limit as $n\to\infty$ of $\zeta^n_t(v)$. By the monotone convergence theorem, we then have $\Eg_{w,\overline{a}}[q(\zeta_t)]=\lim_{n\to\infty}u_n(t)$. Therefore the bound given for $u_n(t)$ still holds for $\Eg_{w,\overline{a}}[q(\zeta_t)]$.
\\ Since $\zeta_t$ does not explode, it is a jump process on a countable set. Therefore we can conclude as in the case where $W^c$ is finite to determine the flip rates.
\end{proof}
We point out that the above proof highlights the natural aspect of the assumption (\ref{cond2}). Indeed, if we take $\lambda_v=1$ for simplicity, (\ref{cond2}) exactly ensures that the invasion process with parameter $\overline a$ is non-explosive, by making it grow slower than a continuous-time branching process with intensity $A$. This point will turn out to be crucial in Corollary \ref{corollaire} below.\\ Now the following assertion establishes a duality between the invasion process with parameter $a$ and the one with parameter $\overline a$. 

\begin{Prop}
Let $v\in V$ and $W\subset V$. Then
\begin{equation}
\Pg_{W,a}(\zeta_t(v)=1)=\Pg_{v,\overline{a}}(\exists w\in W, \zeta_t(w)=1).
\label{dualite}
\end{equation}
\end{Prop}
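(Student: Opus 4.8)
The plan is to prove (\ref{dualite}) by a time-reversal argument carried out directly on the graphical representation. Both sides are probabilities of path-existence events in a random oriented graph, so I will exhibit a single measure-preserving transformation that turns the left-hand event into the right-hand one. Fix the horizon $t$ and note that any path realizing either event stays inside $V\times[0,t]$, so only the restrictions of the Poisson processes to $[0,t]$ are relevant. Introduce the reflection $R:(x,s)\mapsto(x,t-s)$ of $V\times[0,t]$.

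First I would check that $R$ sends paths of $G$ to paths of a reversed graph. A temporal edge $((x,s),(x,s'))$ with $s\le s'$ is carried to $((x,t-s'),(x,t-s))$, which is again a temporal edge since $t-s'\le t-s$; a spatial edge $((x,s),(y,s))$ coming from a jump of $P_{x,y}$ is carried to the \emph{reversed} arrow $((y,t-s),(x,t-s))$. Reading a path backwards, a path from $(w,0)$ to $(v,t)$ in $G$ therefore becomes, after applying $R$, a path from $(v,0)$ to $(w,t)$ in the graph $\widehat G$ obtained from $G$ by reflecting all times and reversing every arrow.

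Next I would identify the law of $\widehat G$. By construction $\widehat G$ places an arrow from $y$ to $x$ at time $t-s$ exactly when $P_{x,y}$ jumps at time $s$, so the arrows $y\to x$ of $\widehat G$ are governed by the time-reflected process $s\mapsto t-s$ applied to $P_{x,y}$. The key distributional fact is that reflecting a Poisson process of intensity $a(x,y)$ on $[0,t]$ about $t/2$ yields a Poisson process of the same intensity $a(x,y)$ on $[0,t]$, and that this can be done independently across the pairs $(x,y)$. Since $\overline a(y,x)=a(x,y)$, the arrows $y\to x$ of $\widehat G$ are then distributed exactly as the arrows of a graph built from the family $\overline a$; that is, $\widehat G$ restricted to $[0,t]$ has the same law as the graph underlying a $\overline a$-invasion process started at $v$.

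Finally, $R$ is a bijection carrying the event $\{\exists\,w\in W:\ (w,0)\to_G(v,t)\}$ onto $\{\exists\,w\in W:\ (v,0)\to_{\widehat G}(w,t)\}$. Taking probabilities and using that $\widehat G$ has the same law as the $\overline a$-graph gives
\[ \Pg_{W,a}(\zeta_t(v)=1)=\Pg_{v,\overline a}(\exists\,w\in W,\ \zeta_t(w)=1), \]
which is (\ref{dualite}). The step I expect to require the most care is the distributional identity for the reflected Poisson processes: one must argue that the reflection is a measure-preserving operation on the whole (countable) product of point configurations, preserving both the intensities and the mutual independence of the family, so that the reflected arrow field genuinely realizes the $\overline a$-dynamics and not merely a process matching it in some weaker sense.
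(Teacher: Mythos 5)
Your proof is correct and is essentially the paper's own argument: the paper defines $\tilde P_{x,y}(s)=P_{y,x}(t)-P_{y,x}(t-s)$, which is precisely your reflection $R$ combined with arrow reversal, and then uses the same pathwise equivalence $(w,0)\to_G(v,t)\Leftrightarrow(v,0)\to_{\tilde G}(w,t)$ together with the fact that the reflected processes form an independent Poisson family with the swapped intensities $\overline a$. The point you flag as delicate (the law and independence of the reflected family) is exactly the fact the paper invokes, so no gap remains.
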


\begin{proof}
Let $t\geq 0$, and for $s\leq t$ let $\tilde P_{x,y}(s)=P_{y,x}(t)-P_{y,x}(t-s)$. Clearly $\tilde P_{x,y}$ is a Poisson process on the time interval $[0,t]$ with intensity $a(y,x)$, and the processes $(\tilde P_{x,y},x\neq y)$ are mutually independant since the processes $(P_{x,y},x\neq y)$ are. We define another graph $\tilde G$ on $V\times[0,t]$ in the same way as $G$, but using $\tilde P_{x,y}$ instead of $P_{x,y}$. Equation (\ref{dualite}) then follows from the equivalence $$(w,0)\to_G(v,t)\Leftrightarrow (v,0)\to_{\tilde G}(w,t).$$
\end{proof}
We now turn to the main result of this section.
\begin{Th}
Let $n\geq 1$ and $\eta\in X$. There exists a coupling of processes $(\xi^{\eta,n},\eta\in X,n\geq 1)$ and $(\zeta^n, n\geq 1)$ such that
\begin{itemize}
\item[\emph{(i)}] $\xi^{\eta,n}$ is a $(\eta,V_n, c )$-finite spin system,
\item[\emph{(ii)}] $\zeta^n$ is a $(V_n^c,a)$-invasion process,
\item[\emph{(iii)}] for any $t\geq 0$ and $v\in V_n, $ we have $$\{\zeta^n_t(v)=0\}\subset\{\forall k\geq n,~\xi^{\eta,n}_t(v)=\xi^{\eta,k}_t(v)\}.$$
\end{itemize}
\label{couplage}
\end{Th}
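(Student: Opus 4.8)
The plan is to realise all the finite spin systems $(\xi^{\eta,k})$ on one probability space through a single Harris-type graphical representation, and then to build each invasion process $\zeta^n$ on top of it by a domination coupling. Concretely, I attach to every site $v$ an independent Poisson clock of rate $C$ (legitimate by (\ref{cond1})) carrying i.i.d.\ uniform marks $U\in[0,1]$; at a ring of the $v$-clock the current configuration $\theta$ is updated to $\theta^v$ when $v$ is an \emph{active} site and $U<c_v(\theta)/C$ (a valid probability by (\ref{cond1})), and is left unchanged otherwise. Declaring $v$ active for $\xi^{\eta,k}$ exactly when $v\in V_k$, and starting every process from the appropriate $\eta$, makes $\xi^{\eta,k}$ a genuine $(\eta,V_k,c)$-finite spin system: it is a jump process on the finite set $X^{V_k}_\eta$ whose rate from $\theta$ to $\theta^v$ is $C\cdot c_v(\theta)/C=c_v(\theta)$ for $v\in V_k$. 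Since the same clocks and marks drive every $\eta$ and every box, the processes are automatically coupled, and I may form the disagreement set $\Delta_t=\{v:\exists k\ge n,\ \xi^{\eta,n}_t(v)\ne\xi^{\eta,k}_t(v)\}$, with $\Delta_0=\emptyset$.

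The heart of the argument is a rate comparison showing that $\Delta$ spreads no faster than the $a$-invasion issued from $V_n^c$. First, a site $w$ can enter $\Delta$ \emph{spontaneously} only if it is frozen in $\xi^{\eta,n}$ but mobile in some $\xi^{\eta,k}$, i.e.\ only if $w\in V_n^c$; these are precisely the sites of the initial infected set $1_{V_n^c}$. Second, for $v\in V_n$ (mobile in every process) a new disagreement is created at a clock ring only when the two competing configurations $\theta=\xi^{\eta,n}_{t^-}$ and $\theta'=\xi^{\eta,k}_{t^-}$ yield different decisions for the common mark $U$, an event of conditional probability $|c_v(\theta)-c_v(\theta')|/C$. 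Since $\theta$ and $\theta'$ agree off $\Delta_{t^-}$, estimate (\ref{coincide}) gives $|c_v(\theta)-c_v(\theta')|\le\sum_{w\in\Delta_{t^-}}a(w,v)$, so the rate at which $v$ enters $\Delta$ is at most $\sum_{w\in\Delta_{t^-}}a(w,v)$. Taking the union over all $k$ (and, if one wishes, over all $\eta$) does not spoil this bound, as any newly disagreeing pair still agrees off $\Delta_{t^-}$. This is exactly the invasion infection rate $\gamma_a(v,\chi)=(1-\chi(v))\sum_{w\ne v}a(w,v)\chi(w)$ evaluated at any $\chi\supseteq\Delta_{t^-}$ with $\chi(v)=0$.

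I then construct $\zeta^n$ as a pure-growth process started from $1_{V_n^c}$ that dominates $\Delta$ from above, maintaining $\Delta_t\subseteq\zeta^n_t$ inductively. Whenever a ring creates a new disagreement at an uninfected $v\in V_n$ I also set $\zeta^n_t(v)=1$; since then $\Delta_{t^-}\subseteq\zeta^n_{t^-}$, the previous bound shows the rate of these shared infections is at most $\gamma_a(v,\zeta^n_{t^-})$, and I complete the infection mechanism of $v$ with an independent, predictably thinned clock carrying the residual rate $\gamma_a(v,\zeta^n_{t^-})-(\text{shared rate})\ge0$. The resulting process only grows, starts from $1_{V_n^c}$, and flips an uninfected $v$ at infinitesimal rate $\gamma_a(v,\zeta^n_{t^-})$; by Proposition \ref{invasion}(i) (applicable since $(V_n^c)^c=V_n$ is finite) it is a $(V_n^c,a)$-invasion process, which is (ii), while (i) was arranged above and $\Delta_t\subseteq\zeta^n_t$ gives (iii): if $\zeta^n_t(v)=0$ then $v\notin\Delta_t$, i.e.\ $\xi^{\eta,n}_t(v)=\xi^{\eta,k}_t(v)$ for all $k\ge n$.

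I expect the main difficulty to be precisely this reconciliation: $\zeta^n$ must keep \emph{exactly} the $(V_n^c,a)$-invasion law while pathwise containing all the disagreement sets simultaneously for every $k\ge n$. A naive coupling in which disagreements percolate along independent rate-$a(w,v)$ arrows cannot achieve both, because confining the influence of each coordinate $w$ to a fixed event of probability $a(w,v)/C$ would force $c_v$ to be additive in the coordinates, whereas $c_v$ is a general function whose sensitivity to $w$ is only \emph{bounded} by $a(w,v)$. This is why the dominating process must be built by a configuration-dependent coupling rather than as a deterministic function of shared noise, and why the rate inequality coming from (\ref{coincide}), together with the superposition that preserves the invasion law while absorbing every disagreement jump, is the technical crux. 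The non-explosion of the dual invasion established in Proposition \ref{invasion}(ii) is then what will guarantee, in the sequel, that $\Delta_t\cap V_n$ stays finite and that letting $n\to\infty$ yields a well-defined limit.
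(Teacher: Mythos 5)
Your proof is correct and takes essentially the same route as the paper: both realise all the finite spin systems through one graphical construction with common Poisson clocks and uniform marks, both use (\ref{coincide}) to bound the width of the set of marks on which two coupled systems decide differently by $\gamma_a(v,\cdot)$ evaluated on a dominating infected set, and both propagate the inclusion of the disagreement set in the invasion process by induction over the jump times; the only real difference is technical, namely that the paper runs the clock at site $v$ at the larger rate $C+A\lambda^{-1}\lambda_v$ so that the whole invasion interval $[A^n_j,A^n_j+\gamma_a(v_j,\zeta^n_{t_{j-1}})]$ fits inside the same marks, whereas you keep rate-$C$ clocks and complete the invasion law by superposing independent, predictably thinned residual clocks, which is an equivalent device. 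One caveat: your parenthetical claim that the union defining $\Delta_t$ could also be taken over all $\eta$ is false --- processes started from different initial configurations need not agree off $\Delta_{t^-}$, so their disagreement zones in the mark space need not overlap and the bound by $\gamma_a(v,\zeta^n_{t^-})$ breaks down --- which is why your $\zeta^n$, exactly like the paper's (whose $A^n_j=\inf_{k\ge n}c_{v_j}(\xi^{\eta,k}_{t_{j-1}})$ depends on $\eta$), must be understood as attached to one fixed $\eta$; since the theorem fixes $\eta$ at the outset and Corollary \ref{corollaire} only needs the per-$\eta$ statement, this does not damage the proof.
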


\begin{proof}
As we announced, we are going to create a common graphical construction to build all the finite spin systems in the same probability space. Not surprisingly, our main ingredients will be a family $(N_v,~v\in V)$ of independent Poisson processes, $N_v$ having intensity $C+A\lambda^{-1}\lambda_v$, and a family $(U_{v,i},~v\in V,~i\geq 1)$ of independent random variables, $U_{v,i}$  being uniformly distributed over the interval $[0,C+A\lambda^{-1}\lambda_v]$. They are all defined on some appropriate probability space $(\Omega,\F,\PP)$, and the expectation under $\PP$ is denoted by $\EE$. For any $n\geq 1$ we consider $N^n=\sum_{v\in V_n}N_v$. Almost surely the jumps of $N^n$ are distinct and have no accumulation point, so there exists a strictly increasing sequence $(t_j,~j\geq 1)$ such that the jumps of $N^n$ are the $t_j,j\geq 1$. Let $v_j$ be the site of $V_n$ such that $N_{v_j}(t_j)-N_{v_j}(t_j^-)=1$, and $u_j=U_{v_j,N_{v_j}(t_j)}$. 
\\We then define $\xi^{\eta,n}_t$ on each interval of time $[t_j,t_{j+1})$ by induction:
\begin{itemize}
\item for $t\in[0,t_1)$, $\xi^{\eta,n}_t=\eta$;
\item for $t\in[t_j,t_{j+1})$, $\xi^{\eta,n}_t=\begin{cases} \left(\xi^{\eta,n}_{t_{j-1}}\right)^{v_j}, & \text{if } u_j<c_{v_j}(\xi^{\eta,n}_{t_{j-1}}),   \\   \xi^{\eta,n}_{t_{j-1}}, & \text{otherwise. }\end{cases}$
\end{itemize}
We also define $\zeta^n$ by induction, using the same Poisson processes:
\begin{itemize}
\item for $t\in[0,t_1)$, $\zeta^n_t=1_{V_n^c}$;
\item for $t\in[t_j,t_{j+1})$, let
\\$\zeta^n_t=
\begin{cases}
\left(\zeta^n_{t_{j-1}}\right)^{v_j}, 
& \text{if } \zeta^n_{t_{j-1}}(v_j)=0\text{, and }A^n_j\leq u_j\leq A^n_j+\gamma_a(v_j,\zeta^n_{t_{j-1}}),
\\   \zeta^n_{t_{j-1}}, & \text{otherwise;}
\end{cases}$
\end{itemize}
where $A^n_j=\inf_{k\geq n} c_{v_j}(\xi^{\eta,k}_{t_{j-1}})$. We recall that $\gamma_a(v_j,\zeta^n_{t_{j-1}})=\sum_{w\neq v_j}a(w,v_j)\zeta^n_{t_{j-1}}(w)$ when $\zeta^n_{t_{j-1}}(v_j)=0$.
\\It again follows from the properties of Poisson processes that each process $\xi^{\eta,n}$ and $\zeta^n$ has the Markov property. Moreover the flip rates are the ones required. Indeed, for each of these processes the flip rate of one coordinate is given by the length of the interval to which we ask $u_j$ to belong for this coordinate to flip, since in our construction this interval is always contained in $[0,C+A\lambda^{-1}\lambda_{v_j}]$ (see (\ref{gamma})). Just observe then that this length has been chosen properly to make the processes have the correct distribution.
\\Let us show (iii) by induction. With the convention $t_0=0$ we have to show that for any $j\geq 0$ and $v\in V$,\begin{equation}\{\zeta^n_{t_j}(v)=0\}\subset\{\forall k\geq n,~\xi^{\eta,n}_{t_j}(v)=\xi^{\eta,k}_{t_j}(v)\}.\label{HR}\end{equation}
The case $j=0$ is a consequence of the definition of the processes. Let us assume that (\ref{HR}) is fulfilled for some $j\geq 0$ and suppose that $\zeta^n_{t_j}(v_{j+1})=0$ (or else the conclusion is straightforward). Then (\ref{coincide}) gives $$\sup_{k\geq n} c_{v_{j+1}}(\xi^{\eta,k}_{t_j})\leq \inf_{k\geq n} c_{v_{j+1}}(\xi^{\eta,k}_{t_j})+\gamma_a(v_{j+1},\zeta^n_{t_j}).$$
Consequently, whenever the transition at time $t_{j+1}$ makes $\xi^{\eta,k}_t(v_{j+1})$ jump but not $\xi^{\eta,k'}_t(v_{j+1})$ for some $k,k'\geq n$, it must necessarily make also $\zeta^n_t(v_{j+1})$ jump from $0$ to $1$. Therefore (\ref{HR}) remains true for $j+1$.
\end{proof}
\begin{Cor}
In the previous coupling, for any $v\in V$ and $t\geq 0$, the sequence $\xi^{\eta,n}_t(v)$ is a.s.\@ constant beyond a certain (random) value of $n$, so we can define $$\xi^\eta_t(v)=\lim_{n\to\infty}\xi^{\eta,n}_t(v).$$
\label{corollaire}
\end{Cor}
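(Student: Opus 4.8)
The plan is to derive the almost sure eventual constancy of $n\mapsto\xi^{\eta,n}_t(v)$ directly from property (iii) of Theorem~\ref{couplage}, by showing that the controlling invasion process satisfies $\zeta^n_t(v)=0$ for all large $n$. Fix $v$ and $t$ and pick $n_0$ with $v\in V_{n_0}$. For $n\geq n_0$ I would introduce the disagreement event $D_n=\{\exists k\geq n:\ \xi^{\eta,n}_t(v)\neq\xi^{\eta,k}_t(v)\}$. Since $\zeta^n_t(v)\in\{0,1\}$, the contrapositive of (iii) reads $D_n\subseteq\{\zeta^n_t(v)=1\}$. Moreover the sequence $\xi^{\eta,n}_t(v)$ is eventually constant exactly on the complement of $\bigcap_{n\geq n_0}D_n$, because $D_N^c$ says precisely that $\xi^{\eta,n}_t(v)=\xi^{\eta,N}_t(v)$ for all $n\geq N$. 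It therefore suffices to prove $\PP\big(\bigcap_{n\geq n_0}D_n\big)=0$, and for this it is enough to show $\PP(\zeta^n_t(v)=1)\to 0$: indeed $\bigcap_{n\geq n_0}D_n\subseteq D_m\subseteq\{\zeta^m_t(v)=1\}$ for every $m\geq n_0$, whence $\PP\big(\bigcap_{n\geq n_0}D_n\big)\leq\PP(\zeta^m_t(v)=1)$ for all $m$, and the right-hand side tends to $0$.

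It remains to establish $\PP(\zeta^n_t(v)=1)\to 0$, which I regard as the crux of the argument. By part (ii) of Theorem~\ref{couplage} the process $\zeta^n$ has law $\Pg_{V_n^c,a}$, so that $\PP(\zeta^n_t(v)=1)=\Pg_{V_n^c,a}(\zeta_t(v)=1)$. Applying the duality relation (\ref{dualite}) with $W=V_n^c$ converts this into a statement about a single invasion process with parameter $\overline a$ started from $v$:
\begin{equation*}
\PP(\zeta^n_t(v)=1)=\Pg_{v,\overline a}\big(\exists w\in V_n^c,\ \zeta_t(w)=1\big).
\end{equation*}
Write $E_n$ for the event on the right. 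As $(V_n)$ is increasing, $V_n^c$ is decreasing and so the $E_n$ are decreasing. I then want to invoke the non-explosion supplied by Proposition~\ref{invasion}(ii): under $\Pg_{v,\overline a}$ one has $\mathrm{Card}(\zeta_t)<+\infty$ almost surely. On this full-measure event the set $\{w:\zeta_t(w)=1\}$ is finite, hence contained in $V_n$ for all $n$ large (since $V_n\uparrow V$), so $E_n$ eventually fails. Thus $\bigcap_n E_n\subseteq\{\mathrm{Card}(\zeta_t)=+\infty\}$ has $\Pg_{v,\overline a}$-probability $0$, and continuity from above for the decreasing sequence $E_n$ yields $\Pg_{v,\overline a}(E_n)\downarrow 0$. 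This is exactly $\PP(\zeta^n_t(v)=1)\to 0$.

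Combining the two steps gives $\PP\big(\bigcap_{n\geq n_0}D_n\big)=0$, so almost surely $\xi^{\eta,n}_t(v)$ is constant for $n$ beyond some random index, and one may set $\xi^\eta_t(v)$ to be this eventual value. The main difficulty is the second step, and it is precisely where hypothesis (\ref{cond2}) enters, through the non-explosion of the dual invasion process guaranteed by (\ref{esperance}) in Proposition~\ref{invasion}(ii). I would deliberately avoid seeking any quantitative decay rate for $\PP(\zeta^n_t(v)=1)$ (which is not available in general and would be needed for a Borel--Cantelli approach): the monotonicity of the $E_n$ together with continuity from above makes mere convergence in probability sufficient here.
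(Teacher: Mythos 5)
Your proposal is correct and follows essentially the same route as the paper: reduce via Theorem~\ref{couplage}(iii) to showing $\Pg_{V_n^c,a}(\zeta_t(v)=1)\to 0$, convert by the duality relation (\ref{dualite}) to the dual invasion process started at $v$, and conclude by the non-explosion guaranteed by (\ref{esperance}) in Proposition~\ref{invasion}(ii). The only cosmetic difference is that you work with the decreasing ``disagreement'' events and continuity from above, while the paper phrases the same argument through the increasing complementary events.
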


\begin{proof}
Fix $v\in V$ and $t\geq 0$. The sequence of events $E_n=\{\forall k\geq n,~\xi^{\eta,k}_t(v)=\xi^{\eta,n}_t(v)\}$ is increasing so by Theorem \ref{couplage}(iii) it is enough to show that $\lim_{n\to\infty}\Pg_{V_n^c,a}$ $(\zeta_t(v)=1)=0$. The formula (\ref{dualite}) implies that 
\begin{align*}
\lim_{n\to\infty}\Pg_{V_n^c,a}(\zeta_t(v)=1) &=\lim_{n\to\infty}\Pg_{v,\overline a}(\exists w\in V_n^c : \zeta_t(w)=1) \\
&=\Pg_{v,\overline a}(\mathrm{Card}(\zeta_t)=+\infty)\\
&=0,
\end{align*}
where the last equality follows from (\ref{esperance}).
\end{proof}

\section{Generator and invariant measures for the limit process}

Up to now we only showed the existence of the limit process $(\xi^\eta_t,~t\geq 0)$ but we have no information about its law. In this section we show that it has the Markov property and that the expression of its generator is the one expected for functions in $\DX$. We adopt the notations and vocabulary from Chapter 1 of \cite{Liggett}.
\\For any function $f$ on $X$, let $\Delta_f(v)=\sup_{\eta\in X}|f(\eta)-f(\eta^v)|$, the maximal influence of the coordinate $\eta(v)$ on the value of $f(\eta)$. Rather than working with functions depending on a finite number of coordinates, we prefer to use the following space of \gui{good} functions, which turns out to be more natural for our purpose: $$\DX=\{f:X\to\RR,~ f\text{ is continuous and }\sum_{v\in V}\lambda_v\Delta_f(v)<+\infty\}.$$For $f\in\DX$ let $\tn f\tn=\sum_{v\in V}\lambda_v\Delta_f(v)$, and for $f$ bounded (which is the case if $f\in\DX$ since $X$ is a compact space) let $\Vert f\Vert_{\infty}=\sup_{\eta\in X}|f(\eta)|.$
The Markov pregenerator related to our dynamics is the operator $\Omega$ defined on $\DX$ by
$$\Omega f(\eta)=\sum_{v\in V}c_v(\eta)\big[f(\eta^v)-f(\eta)\big],~f\in\DX.$$ 
We will also consider
$$\Omega_n f(\eta)=\sum_{v\in V_n}c_v(\eta)\big[f(\eta^v)-f(\eta)\big],~f\in\DX,$$ its restriction on the finite volume $V_n$.
\\For any continuous function $f$ on $X$ and $\eta\in X$ we define $S_n(t)f(\eta)=\EE_{\eta,V_n}[f(\xi_t)]$ and $S(t)f(\eta)=\EE[f(\xi^\eta_t)]$. It easily follows from the bounded convergence theorem that \begin{equation}\lim_{n\to\infty}S_n(t)f(\eta)=S(t)f(\eta).\label{tcd}\end{equation}
Since we will use this theorem several times (on the probability space, on an interval of time, on the set $V$) we will refer to it as the BCT. We recall that there is a one-to-one correspondence between Markov semigroups and Markov generators. The result of this section is the following.
\begin{Th}
$S(t)$ defines a Markov semigroup, and for $f\in\DX$ we have 
\begin{equation}\lim_{t\to 0}\dfrac{S(t)f(\eta)-f(\eta)}{t}=\Omega f(\eta).\label{theor}\end{equation}
\end{Th}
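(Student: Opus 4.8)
The plan is to split the theorem into its two assertions and handle them separately, exploiting the convergence $S_n(t)f\to S(t)f$ from (\ref{tcd}) together with the non-explosion control already secured in Proposition \ref{invasion}(ii) and Corollary \ref{corollaire}. First I would establish that $S(t)$ is a Markov semigroup. Each $S_n(t)$ is the semigroup of the finite spin system on $V_n$, hence genuinely a Markov semigroup on $C(X)$; positivity, the normalization $S_n(t)\ind=\ind$, and contractivity all pass to the limit under (\ref{tcd}) by the BCT. The semigroup identity $S(t+s)=S(t)S(s)$ is the delicate point of this half: I would derive it from the Markov property of the limit process $\xi^\eta$, which itself follows because the graphical construction of Theorem \ref{couplage} uses Poisson processes with stationary independent increments, so that $\xi^\eta$ restarted at time $s$ from its current configuration has the same law as a fresh copy. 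Strong continuity of $t\mapsto S(t)f$ I would get by the same coupling, controlling $|S(t)f(\eta)-f(\eta)|$ through $\tn f\tn$ and the expected number of flips, which is finite by (\ref{esperance}).

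For the generator identity (\ref{theor}), the natural route is to pass to the limit in the corresponding finite-volume statement. For each fixed $n$ the finite spin system has bounded generator $\Omega_n$, so the standard Dynkin/Kolmogorov computation gives
\begin{equation}
S_n(t)f(\eta)-f(\eta)=\int_0^t S_n(r)\Omega_n f(\eta)\,\ud r,\label{dynkinfin}
\end{equation}
valid for every $f\in\DX$ since such $f$ is bounded and $\Omega_n f$ is again bounded and continuous. I would then let $n\to\infty$ in (\ref{dynkinfin}). The left-hand side converges to $S(t)f(\eta)-f(\eta)$ by (\ref{tcd}). For the right-hand side I would argue that $\Omega_n f\to\Omega f$ uniformly: indeed
$$\|\Omega_n f-\Omega f\|_\infty\leq C\sum_{v\in V_n^c}\Delta_f(v)\leq C\lambda^{-1}\sum_{v\in V_n^c}\lambda_v\Delta_f(v)\longrightarrow 0,$$
the tail of a convergent series because $f\in\DX$ and (\ref{cond1}) bounds the rates by $C$. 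Combining this uniform convergence with $S_n(r)g(\eta)\to S(r)g(\eta)$ and a dominated-convergence argument on the time integral (the integrands are uniformly bounded by $2C\lambda^{-1}\tn f\tn$) yields
$$S(t)f(\eta)-f(\eta)=\int_0^t S(r)\Omega f(\eta)\,\ud r.$$
Dividing by $t$ and letting $t\to 0$, continuity of $r\mapsto S(r)\Omega f(\eta)$ at $r=0$ delivers (\ref{theor}).

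The main obstacle I anticipate is making the interchange of limits rigorous rather than the individual convergences, which are each routine. The quantity $S_n(r)\Omega_n f$ carries a double dependence on $n$ — through the evolving process and through the truncated generator — so one must be careful to split
$$S_n(r)\Omega_n f-S(r)\Omega f=\big(S_n(r)\Omega_n f-S_n(r)\Omega f\big)+\big(S_n(r)\Omega f-S(r)\Omega f\big),$$
controlling the first bracket by $\|\Omega_n f-\Omega f\|_\infty$ (uniform in $r$, by contractivity of $S_n(r)$) and the second by (\ref{tcd}) applied to the fixed continuous function $\Omega f$. A secondary technical point is verifying that $\Omega f$ is continuous and that $\Omega f\in C(X)$ so that (\ref{tcd}) legitimately applies to it; this follows from $f\in\DX$ and the continuity of each $c_v$, with the series defining $\Omega f$ converging uniformly by the same tail estimate above. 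Once these uniformity bounds are in place, the proof assembles directly from the finite-volume Dynkin formula and the two convergence inputs already available.
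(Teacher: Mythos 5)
Your first half contains a genuine gap. You reduce the semigroup law $S(t_1+t_2)=S(t_1)S(t_2)$ to the Markov property of the limit process and assert that this property follows \gui{because the graphical construction uses Poisson processes with stationary independent increments}. That argument does prove the restart property for each fixed $n$: $\xi^{\eta,n}_{t+s}$ is a finite spin system run for time $s$ from $\xi^{\eta,n}_t$ with fresh randomness. But the limit process restarted at time $t$ is the limit of finite systems started from the $n$-\emph{dependent} configurations $\xi^{\eta,n}_t$, and to identify this limit in law with the limit dynamics started from the single configuration $\xi^\eta_t$ you need continuity of the finite-volume dynamics in their initial configuration, \emph{uniformly in} $n$. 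That is precisely what the paper's second coupling (the processes $\Gamma^n$) and the resulting estimate (\ref{deltasntf}) provide; the paper then proves uniform convergence (\ref{sntf-stf}) and uses (\ref{deltasntf}) again to handle the term $(S_n(t_1)-S(t_1))S_n(t_2)f$ in the decomposition (\ref{decompose}), where the function being propagated itself varies with $n$, so the pointwise limit (\ref{tcd}) is not enough. A related omission: a Markov semigroup in the sense the paper adopts (Definition 1.4 of \cite{Liggett}) consists of operators mapping $\CX$ into $\CX$, and continuity of $\eta\mapsto S(t)f(\eta)$ cannot be extracted from pointwise convergence of continuous functions; it too comes from the uniform convergence (\ref{sntf-stf}), hence from the couplings. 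In short, the semigroup half of your proposal silently assumes the Feller-type estimates that constitute the technical core of the paper's proof.

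Your second half — the generator identity — is sound and genuinely different from the paper's route. You pass to the limit in the backward (Dynkin) form $S_n(t)f-f=\int_0^t S_n(r)\Omega_n f\,\ud r$, whereas the paper uses the forward form (\ref{fini1}), $S_n(t)f-f=\int_0^t \Omega_n S_n(s)f\,\ud s$, and must then establish (\ref{equagen}). Your choice is the more economical one: the double $n$-dependence splits as $S_n(r)(\Omega_n f-\Omega f)+(S_n(r)-S(r))\Omega f$, where the first bracket is controlled uniformly in $r$ by contractivity and the elementary tail bound $\Vert\Omega_n f-\Omega f\Vert_\infty\leq C\sum_{v\in V_n^c}\Delta_f(v)\leq C\lambda^{-1}\sum_{v\in V_n^c}\lambda_v\Delta_f(v)$, and the second by (\ref{tcd}) applied to the fixed continuous function $\Omega f$; no estimate on $\Delta_{S_n(s)f}(w)$ is needed, so this half avoids the coupling machinery that the paper's forward form requires. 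One step is unsupported, however: at the end you invoke right-continuity of $r\mapsto S(r)\Omega f(\eta)$ at $r=0$, but the strong continuity you propose to prove (controlled by $\tn f\tn$ and the expected number of flips) applies to functions in $\DX$, and $\Omega f$ lies in $\CX$ while it is not evident under (\ref{cond1})--(\ref{cond2}) that $\Omega f\in\DX$. This is fixable: either extend strong continuity from $\DX$ to all of $\CX$ by density and contraction, or note that in the graphical construction the coordinate $\xi^\eta_r(v)$ cannot move before the first jump of $N_v$, so $\xi^\eta_r\to\eta$ a.s.\@ as $r\downarrow 0$ and the BCT applies directly to the continuous function $\Omega f$. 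But as written, the last line of your argument does not follow from the tools you have set up.
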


\begin{proof}
Let us recall two identities for $S_n(t)$. For any $f\in\DX$ and $\eta\in X$ we have 
\begin{equation}S_n(t)f(\eta)=f(\eta)+\int_0^t\Omega_n S_n(s)f(\eta)\ud s,\label{fini1}\end{equation}
and for $t_1,~t_2\geq 0$ we have
\begin{equation}S_n(t_1)S_n(t_2)f(\eta)=S_n(t_1+t_2)f(\eta).\label{fini2}\end{equation}
Since both concern finite state space Markov processes they are elementary (see e.g. theorem 2.1.1 in \cite{Norris}). We begin by establishing some inequalities. First, for $f\in\DX$, 
\begin{equation}
|\Omega f(\eta)|\leq \lambda^{-1}C\tn f\tn,\text{ and } |\Omega_n f(\eta)|\leq \lambda^{-1}C\tn f\tn.
\label{omegaf}
\end{equation}
The first inequality simply follows from $|\Omega f(\eta)|\leq\sum_{v\in V}c_v(\eta)|f(\eta^v)-f(\eta)|\leq C\sum_{v\in V}\lambda^{-1}\lambda_v\Delta_f(v)$. A similar computation leads to the other one. Then, in order to control $\Delta_{S(t)f}(w)$, we consider two configurations $\eta_1$ and $\eta_2=(\eta_1)^w$ and we describe a new coupling based on an idea similar to the one in Section 1. We take $(N_v,~v\in V)$, $(U_{v,i},~v\in V,~i\geq 1)$ and the sequences $v_j$, $t_j$ and $u_j$ as in the proof of Theorem \ref{couplage}. Let $(\xi^{\eta_1,n} ,t\geq 0)$ and $(\xi^{\eta_2,n}, t\geq 0)$ be defined as follows:
\begin{itemize}
\item for $t\in[0,t_1)$, $\xi_t^{\eta_i,n}=\eta_i$;
\item for $t\in[t_j,t_{j+1})$, $\xi_t^{\eta_i,n}=\begin{cases}\left(\xi_{t_{j-1}}^{\eta_i,n}\right) ^{v_j},& \text{if } u_j<c_{v_j}(\xi_{t_{j-1}}^{\eta_i,n}),  \\ \xi_{t_{j-1}}^{\eta_i,n}, & \text{otherwise. }  \end{cases}$
\end{itemize}
We also define another process $(\Gamma^n_t,t\geq 0)$ using the same Poisson processes:
\begin{itemize}
\item for $t\in[0,t_1)$, $\Gamma^n_t=1_w$,
\item for $t\in[t_j,t_{j+1})$,
\\$\Gamma^n_t=
\begin{cases}
\left(\Gamma^n_{t_{j-1}}\right)^{v_j}, 
& \text{if } \Gamma^n_{t_{j-1}}(v_j)=0\text{, and } B^n_j\leq u_j\leq B^n_j+\gamma_{a_n}(v_j,\Gamma^n_{t_{j-1}}),   
\\   \Gamma^n_{t_{j-1}}, & \text{otherwise;}
\end{cases}$
\end{itemize}
where $B^n_j=\min_{i=1,2}c_{v_j}(\xi_{t_{j-1}}^{\eta_i,n})$. It is clear that $(\xi_t^{\eta_i,n},t\geq 0)$ is a $(\eta_i,V_n, c)$-finite spin system, and that $(\Gamma^n_t, t\geq 0)$ is a $(w,a_n)$-invasion process, keeping the notation (\ref{notation}). Moreover, for any site $v$, since the inclusion $\{\xi_t^{\eta_1,n}(v)\neq\xi_t^{\eta_2,n}(v) \ \}\subset\{ \Gamma^n_t(v)=1 \}$ is preserved by any possible transition and it is true at time $t=0$, it remains true for any $t\geq 0$. It then follows from (\ref{dualite}) and (\ref{monotonie}) that $$\PP(\xi_t^{\eta_1,n}(v)\neq\xi_t^{\eta_2,n}(v))\leq \PP(\Gamma^n_t(v)=1)=\Pg_{w,a_n}(\zeta_t(v)=1)\leq\Pg_{v,\overline{a}}(\zeta_t(w)=1).$$
\\The above coupling enables us to write
\begin{align*}
|S_n(t)f(\eta_1)-S_n(t)f(\eta_2)| &\leq\EE\big[|f(\xi^{\eta_1,n}_t)-f(\xi^{\eta_2,n}_t)|\big]\\
&\leq\EE\left[\sum_{v\in V}\Delta_f(v)\ind_{\{ \xi^{\eta_1,n}_t(v)\neq \xi^{\eta_2,n}_t (v)\}}\right]\\
&\leq\sum_{v\in V}\Delta_f(v)\PP( \xi^{\eta_1,n}_t(v)\neq \xi^{\eta_2,n}_t(v) ).
\end{align*}
Hence, considering this for any pair ($\eta_1,\eta_2$) of configurations that coincide everywhere off $w$, we get
 
\begin{equation}
\Delta_{S_n(t)f}(w)\leq\sum_{v\in V}\Delta_f(v)\Pg_{v,\overline{a}}(\zeta_t(w)=1).
\label{deltasntf}
\end{equation}
Then, by letting $n\to\infty$, we also get
\begin{equation}
\Delta_{S(t)f}(w)\leq\sum_{v\in V}\Delta_f(v)\Pg_{v,\overline{a}}(\zeta_t(w)=1).
\label{deltastf}
\end{equation}
Taking the sum over $w$ and interchanging the order of the sums provides thanks to (\ref{esperance}):
\begin{align*}
\tn S_n(t)f\tn &\leq\sum_{v\in V}\Delta_f(v)\sum_{w\in V}\lambda_w\Pg_{v,\overline{a}}(\zeta_t(w)=1)\\
&\leq\sum_{v\in V}\Delta_f(v)\Eg_{v,\overline{a}}[q(\zeta_t)]\\
&\leq\sum_{v\in V}\Delta_f(v)\lambda_v e^{At}.
\end{align*}
Therefore $S_n(t)f$ lies in $\DX$ if $f$ does, and we have the following inequality:
\begin{equation}
\tn S_n(t)f\tn\leq e^{At}\tn f\tn.
\label{ntsntf}
\end{equation}
Starting from (\ref{deltastf}) we get similarly:
\begin{equation}
\tn S(t)f\tn\leq e^{At}\tn f\tn.
\label{ntstf}
\end{equation}
For the definition of Markov semigroups we refer to Definition 1.4 of \cite{Liggett}. In order to show that $S(t)$ has that property, the only two non-obvious things to check are that for any $f\in\CX$, 
\begin{itemize}
\item[(I)] for $t_1,t_2\geq 0,~S(t_1)S(t_2)f=S(t_1+t_2)f$;
\item[(II)] the mapping $t\mapsto S(t)f$ is right-continuous on $\RR^+$.
\end{itemize}
It is enough to show that (I) and (II) are fulfilled for $f\in\DX$ since this is a dense subspace of $\CX$.
\\For (I) we first observe that the convergence in (\ref{tcd}) is actually uniform. Indeed, using the coupling introduced in Theorem \ref{couplage},
\begin{align*}
|S_n(t)f(\eta)-S(t)f(\eta)| &\leq\EE\big[|f(\xi^{\eta,n}_t)-f(\xi^\eta_t)|\big]\\
&\leq\EE\left[\sum_{v\in V}\Delta_f(v)\ind_{\{ \xi^{\eta,n}_t(v)\neq \xi^\eta_t (v)\}}\right]\\
&\leq\sum_{v\in V}\Delta_f(v)\PP( \xi^{\eta,n}_t(v)\neq \xi^\eta_t(v) )\\
&\leq\sum_{v\in V}\Delta_f(v)\Pg_{V_n^c,a}(\zeta_t(v)=1),
\end{align*}
which goes to $0$ by the BCT since $f\in\DX$ and $\lim_{n\to\infty}\Pg_{V_n^c,a}(\zeta_t(v)=1)=0$ (see the proof of Corollary \ref{corollaire}). Thus
\begin{equation}
\lim_{n\to\infty}\Vert S_n(t)f-S(t)f\Vert _{\infty}=0.
\label{sntf-stf}
\end{equation}
Now with (\ref{fini2}) we see that in order to obtain (I) for $f\in\DX$ it is enough to show that $\lim_{n\to\infty}\Vert S_n(t_1)S_n(t_2)f-S(t_1)S(t_2)f\Vert_\infty=0$. To do this we decompose:
\begin{align}
\Vert S_n(t_1)S_n(t_2)f-S(t_1)S(t_2)f\Vert_{\infty}&\leq \Vert (S_n(t_1)-S(t_1))S_n(t_2)f \Vert_{\infty} \nonumber\\
&\hspace{2cm}+ \Vert S(t_1)(S_n(t_2)-S(t_2))\Vert_{\infty}.
\label{decompose}
\end{align}
The first term in the right-hand side of (\ref{decompose}) is less than $$\sum_{v\in V}\Delta_{S_n(t_2)f}(v)\Pg_{V_n^c,a}(\zeta_t(v)=1).$$ As $n\to\infty$ this sum goes to $0$ by the BCT. Indeed by (\ref{deltasntf}) its summand is bounded by $\sum_{u\in V}\Delta_f(u)\Pg_{u,\overline{a}}(\zeta_{t_2}(v)=1)$, and the sum over $v\in V$ of this bound is finite because by (\ref{esperance}) it is less than $\lambda^{-1}\tn f\tn e^{A t}$. 
\\The second term also goes to $0$ because of (\ref{sntf-stf}) and the inequality $\Vert S(t_1)(S_n(t_2)-S(t_2))f\Vert_\infty \leq\Vert S_n(t_2)f-S(t_2)f\Vert _{\infty}$. Thus (I) is proved.
\\Now in order to prove (II) we begin by showing that for any $f\in\DX$,
\begin{equation}
S(t)f(\eta)=f(\eta)+\int_0^t\Omega S(s)f(\eta)\ud s.
\label{equagen}
\end{equation}
The integral is well defined because (\ref{omegaf}) and (\ref{ntstf}) give
\begin{equation}\label{omegasf}|\Omega S(s)f(\eta)|\leq \lambda^{-1} Ce^{As}\tn f\tn.\end{equation} 
Thanks to (\ref{fini1}) we just have to show that for each $s\geq0$, $\lim_{n\to\infty}\Omega_n S_n(s)f(\eta)=\Omega S(s)f(\eta)$. This together with the BCT --that we may use thanks to the bound $|\Omega_n S_n(s)f(\eta)|\leq \lambda^{-1}Ce^{As}\tn f\tn$, see (\ref{omegaf}) and (\ref{ntsntf})-- is enough to conclude. 
On one hand, 
\begin{align*}
|(\Omega_n-\Omega)S_n(s)f(\eta)|&\leq C\sum_{w\in V_n^c}\Delta_{S_n(s)f}(w)\\
&\leq C\sum_{w\in V_n^c}\sum_{v\in V}\Delta_f(v)\Pg_{v,\overline{a}}(\zeta_s(w)=1)\\
&= C\sum_{v\in V}\Delta_f(v) \sum_{w\in V_n^c}\Pg_{v,\overline{a}}(\zeta_s(w)=1),
\end{align*}
and for any $v\in V$ we have $\lim_{n\to\infty}\sum_{w\in V_n^c}\Pg_{v,\overline{a}}(\zeta_s(w)=1)=0$ since $$\sum_{w\in V}\Pg_{v,\overline{a}}(\zeta_s(w)=1)\leq\lambda^{-1}\Eg_{v,\overline{a}}[q(\zeta_s)]<+\infty,$$ so applying the BCT provides $\lim_{n\to\infty}|(\Omega_n-\Omega)S_n(s)f(\eta)|=0$. On the other hand, 
\begin{align*}
\Omega\big(S_n(s)-S(s)\big)f(\eta) &=\sum_{w\in V}c_w(\eta)\Big[\big(S_n(s)f(\eta^w)-S_n(s)f(\eta)\big)\\
&\hspace{2cm}-\big(S(s)f(\eta^w)-S(s)f(\eta)\big)\Big].
\end{align*}
In the right-hand side the summand goes to $0$ as $n\to\infty$ and by (\ref{deltasntf}) and (\ref{deltastf}) it is bounded by $C( \Delta_{S_n(s)f}(w)+\Delta_{S(s)f}(w))\leq 2C\sum_{v\in V}\Delta_f(v)\Pg_{v,\overline{a}}(\zeta_s(w)=1)$, which does not depend on $n$. Since the sum of this bound over $w$ is smaller than $2\lambda^{-1}Ce^{As} \tn f\tn<+\infty$, (\ref{equagen}) follows from the BCT. 
\\We are now able to conclude for (II) since (\ref{equagen}) together with (\ref{omegasf}) provide
\begin{equation}
\Vert S(t)f-f\Vert _\infty\leq\lambda^{-1}A^{-1}C(e^{At}-1)\tn f\tn,
\label{stf-f}
\end{equation}
which implies that $t\mapsto S(t)f(\eta)$ is continuous at $t=0$, and the continuity esaily extends to $\RR^+$ with (I).
\\In order to complete the proof of our theorem it remains to show that for $f\in\DX$ and $\eta\in X$, $$\lim_{t\to 0}\dfrac{S(t)f(\eta)-f(\eta)}{t}=\Omega f(\eta).$$
The mapping $t\mapsto \Omega S(t)f(\eta)$ is continuous at $t=0$. Indeed we can write 
\begin{align*}
\Omega\big(S(t+s)-S(t)\big)f(\eta) &=\sum_{w\in V}c_w(\eta)\Big[\big(S(t+s)f(\eta^w)-S(t+s)f(\eta)\big)\\
&\hspace{2cm}-\big(S(t)f(\eta^w)-S(t)f(\eta)\big)\Big],
\end{align*}
and conclude by an argument analogous to the one in the proof of (\ref{equagen}) using the continuity of $t\mapsto S(t)f(\eta)$. The continuity of  $t\mapsto \Omega S(t)f(\eta)$ together with (\ref{equagen}) directly imply $(\ref{theor})$.
\end{proof}


Let $\mu$ be a probability measure on $X$. We will denote by $\mu S(t)$ the unique distribution on $X$ such that for any bounded measurable function $f$ on $X$, $\int f\ud[\mu S(t)]=\int S(t)f\ud\mu$. The measure $\mu$ is said \emph{invariant} if $\mu S(t)=\mu$ for any $t\geq 0$, what is equivalent to the condition $$\forall f\in\DX,~\int S(t)f\ud\mu=\int f\ud\mu.$$ We give a concrete criterion to check that $\mu$ is invariant. A function on $X$ is said to be \emph{local} if it depends only on a finite number of coordinates. 
\begin{Prop} 
Let $\mu$ be a probability measure on $X$. if $\int\Omega f\ud\mu=0$ for any local function $f$, then $\mu$ is invariant.
\end{Prop}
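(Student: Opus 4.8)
The plan is to first establish the a priori stronger-looking fact that $\int\Omega g\,\ud\mu = 0$ for \emph{every} $g\in\DX$, and only then feed this into the integral equation (\ref{equagen}). Granting the former, since $S(s)f\in\DX$ whenever $f\in\DX$ by (\ref{ntstf}), I would integrate (\ref{equagen}) against $\mu$ and exchange the order of integration — legitimate because $|\Omega S(s)f(\eta)|\le\lambda^{-1}Ce^{As}\tn f\tn$ by (\ref{omegasf}), so the integrand is bounded on the finite-measure product space $[0,t]\times X$ — to obtain
\begin{equation*}
\int S(t)f\,\ud\mu-\int f\,\ud\mu=\int_0^t\Big(\int\Omega S(s)f\,\ud\mu\Big)\ud s=0 .
\end{equation*}
As recalled just before the statement, this identity for all $f\in\DX$ is precisely the invariance of $\mu$.

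The heart of the matter is thus the claim that $\int\Omega g\,\ud\mu = 0$ for all $g\in\DX$, knowing it only for local functions. I would prove it by approximation: for $g\in\DX$ let $g_n$ be the local function $g_n(\eta)=g(\eta_n)$, where $\eta_n$ agrees with $\eta$ on $V_n$ and vanishes on $V_n^c$. Each $g_n$ depends only on the coordinates in $V_n$, hence is local and continuous, so $\int\Omega g_n\,\ud\mu=0$ by hypothesis; it remains to check that $\int\Omega g_n\,\ud\mu\to\int\Omega g\,\ud\mu$. Writing $D_vg(\eta)=g(\eta^v)-g(\eta)$ and observing that $D_vg_n(\eta)=0$ for $v\notin V_n$ while $D_vg_n(\eta)=D_vg(\eta_n)$ for $v\in V_n$ (since $(\eta^v)_n=(\eta_n)^v$), I would split
\begin{equation*}
\Omega(g-g_n)(\eta)=\sum_{v\in V_n}c_v(\eta)\big[D_vg(\eta)-D_vg(\eta_n)\big]+\sum_{v\notin V_n}c_v(\eta)D_vg(\eta).
\end{equation*}
The second sum is bounded in absolute value by $C\sum_{v\notin V_n}\Delta_g(v)$, the tail of a convergent series (because $\sum_v\Delta_g(v)\le\lambda^{-1}\tn g\tn<+\infty$), so its integral against $\mu$ vanishes as $n\to\infty$.

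The delicate term is the bulk sum, and this is where I expect the only genuine obstacle: the pointwise bound $|D_vg(\eta)-D_vg(\eta_n)|\le 2\Delta_g(v)$ is uniform in $\eta$, but summing it over the growing box $V_n$ yields nothing useful, so one cannot hope for uniform convergence of $\Omega g_n$ to $\Omega g$. The resolution is to integrate \emph{first} and only then pass to the limit. For each fixed $v$ the integrand $c_v(\eta)\big[D_vg(\eta)-D_vg(\eta_n)\big]$ tends to $0$ pointwise as $n\to\infty$, because $\eta_n\to\eta$ in the product topology and $D_vg$ is continuous, and it is dominated by $2C\Delta_g(v)$; hence the BCT on $(X,\mu)$ gives $\int c_v(\eta)\big[D_vg(\eta)-D_vg(\eta_n)\big]\,\ud\mu\to0$. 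Since these integrals are themselves dominated by the summable sequence $2C\Delta_g(v)$, a second application of the BCT, now on the set $V$, shows that $\sum_{v\in V_n}\int c_v(\eta)\big[D_vg(\eta)-D_vg(\eta_n)\big]\,\ud\mu\to0$. Combining the two sums yields $\int\Omega g_n\,\ud\mu\to\int\Omega g\,\ud\mu$, whence $\int\Omega g\,\ud\mu=0$, which closes the argument.
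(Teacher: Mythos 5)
Your proof is correct, but it takes a genuinely different route from the paper. The paper's own proof is two lines of citation: it invokes Theorem 5.2 of Liggett's book (the law of $(\xi^\eta_t,\,t\geq 0)$ is the unique solution of the martingale problem for $\Omega$ started at $\eta$) to get well-posedness of the martingale problem, and then Proposition 6.10 of the same book, which turns the condition $\int\Omega f\,\ud\mu=0$ on a suitable class of functions directly into invariance. You instead give a self-contained argument using only material proved in the paper itself: you first extend the hypothesis from local functions to all of $\DX$ via the cut-off approximation $g_n(\eta)=g(\eta_n)$ --- where the essential care is, as you note, to integrate against $\mu$ for each fixed $v$ before summing over $v$, with two applications of the BCT, since the bulk sum admits no useful uniform bound --- and you then integrate the forward equation (\ref{equagen}) against $\mu$, using (\ref{omegasf}) to justify Fubini and (\ref{ntstf}) to ensure $S(s)f\in\DX$ so that the extended hypothesis kills the integrand. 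What each approach buys: the paper's is much shorter and places the result inside standard martingale-problem theory, but it imports that machinery wholesale (including the implicit identification of the constructed semigroup with the one attached to the pregenerator in Liggett's framework); yours stays entirely within the elementary toolkit the paper advertises, and as a by-product establishes the reusable fact that $\int\Omega g\,\ud\mu=0$ for local $g$ self-improves to all $g\in\DX$. Two points you should make explicit to be fully rigorous, though neither is a gap: membership $S(s)f\in\DX$ requires continuity of $S(s)f$ in addition to the norm bound (\ref{ntstf}), which follows from the paper's Theorem 2 asserting that $S(t)$ is a Markov semigroup (or from the uniform convergence (\ref{sntf-stf}) of the continuous functions $S_n(s)f$); and Fubini on $[0,t]\times X$ needs joint measurability of $(s,\eta)\mapsto\Omega S(s)f(\eta)$, which follows since this map is continuous in $s$ for fixed $\eta$ and continuous in $\eta$ for fixed $s$.
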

\begin{proof}
Theorem 5.2 in \cite{Liggett} states that the law of the process $(\xi^\eta_t,~t\geq 0)$ gives the unique solution to the martingale problem for $\Omega$ starting from  $\eta$. Thus the martingale problem is well-posed and consequently Proposition 6.10 of \cite{Liggett} gives the desired result
\end{proof}

\textbf{Acknowledgements.}\quad The author wishes to thank Enrique Andjel, Gr\'egory Maillard and Etienne Pardoux for their valuable comments and suggestions that considerably improved the style of the paper.

\end{document}